\title{Locally octahedral and locally almost square K\"othe-Bochner spaces}
\author{Jan-David Hardtke}
\date{}
\providecommand{\N}{\mathbb{N}}
\providecommand{\R}{\mathbb{R}}
\providecommand{\A}{\mathcal{A}}
\providecommand{\B}{\mathcal{B}}
\providecommand{\Po}{\mathcal{P}}
\providecommand{\ssq}{\subseteq}
\providecommand{\sm}{\setminus}
\providecommand{\eps}{\varepsilon}
\providecommand{\keywords}[1]{
{\let\thefootnote=\relax
\footnote{{\em Keywords}: #1}}
\addtocounter{footnote}{-1}
}
\providecommand{\AMS}[1]{
{\let\thefootnote=\relax
\footnote{{\em AMS Subject Classification} (2020): #1}}
\addtocounter{footnote}{-1}
}
\providecommand{\address}{
{\sc \noindent Department of Mathematics \\
Universit\"at Leipzig \\
Augustusplatz 10, 04109 Leipzig \\
Germany \\}
}
\DeclarePairedDelimiter{\norm}{\lVert}{\rVert}
\DeclarePairedDelimiter{\set}{\lbrace}{\rbrace}
\DeclarePairedDelimiter{\abs}{\lvert}{\rvert}
\theoremstyle{definition}
\newtheorem{definition}{Definition}[section]
\newtheorem*{definition*}{Definition}
\newtheorem*{example*}{Example}
\newtheorem*{remark*}{Remark}
\theoremstyle{plain}
\newtheorem{lemma}[definition]{Lemma}
\newtheorem*{lemma*}{Lemma}
\newtheorem*{proposition*}{Proposition}
\newtheorem{theorem}[definition]{Theorem}
\newtheorem*{theorem*}{Theorem}
\newtheorem*{corolary*}{Corollary}
\newenvironment{Abstract}{\centering\begin{minipage}{0.8\textwidth} \noindent \small {\sc Abstract.}}{\end{minipage}\par}
\definecolor{darkgreen}{rgb}{0,0.5,0}
\numberwithin{equation}{section}
\providecommand{\email}{{\it E-mail address:} \href{mailto:hardtke@math.uni-leipzig.de}{\tt hardtke@math.uni-leipzig.de}}
\begin{document}
	
\maketitle

\begin{Abstract}
It has been proved in \cite{hardtke} that a K\"othe-Bochner space $E(X)$ is locally octahedral/locally almost square
if $X$ has the respective property and the simple functions are dense in $E(X)$.\par
Here we show that the result still holds true without the density assumption. The proof makes use of the Kuratowski-Ryll-Nardzewski Theorem on measurable selections.
\end{Abstract}
\keywords{K\"othe-Bochner spaces; locally octahedral spaces; locally almost square spaces; measurable selections}
\AMS{46B20 46E30 46E40 28B20}

\section{Introduction}\label{sec:introduction}
Let $X$ be a real Banach space. We denote by $X^*$ its topological dual, by $B_X$ its closed unit ball and by $S_X$ its unit sphere.\par
$X$ is locally octahedral (LOH) if the following holds: for every $x\in S_X$ and every $\eps>0$ there exists $y\in S_X$ such that $\norm{x\pm y}\geq 2-\eps$. This notion was introduced in \cite{haller3} in connection with the so called diameter-two-properties.\par
$X$ is locally almost square (LASQ) if the following holds: for every $x\in S_X$ and every $\eps>0$ there exists $y\in S_X$ such that $\norm{x\pm y}\leq 1+\eps$. This notion was introduced in \cite{abrahamsen2}.\par
For more information on these and related properties the reader may consult \cites{abrahamsen,abrahamsen2,abrahamsen3,acosta,becerra-guerrero,becerra-guerrero2,becerra-guerrero4,becerra-guerrero5,becerra-guerrero3,godefroy,haller,haller2,haller3,haller4,haller5,hardtke2,hardtke,kubiak,langemets1,langemets2,lopez-perez}, \cite{deville}*{Theorem 2.5, p.106} and references therein.\par
Now consider a complete, $\sigma$-finite measure space $(S,\A,\mu)$. For a set $A\subseteq S$ we denote by $\chi_A$ the characteristic function of $A$. Let $(E,\norm{\cdot}_E)$ be a Banach space of real-valued measurable functions on $S$ (modulo equality $\mu$-almost everywhere) such that the following holds:
\begin{enumerate}[(i)]
\item $\chi_A\in E$ for every set $A\in \A$ with $\mu(A)<\infty$.
\item If $f\in E$ and $A\in \A$ with $\mu(A)<\infty$, then $f$ is $\mu$-integrable over $A$.
\item If $g$ is measurable and $f\in E$ such that $\abs*{g(t)}\leq\abs*{f(t)}$ $\mu$-a.\,e. then $g\in E$
and $\norm{g}_E\leq\norm{f}_E$.
\end{enumerate}
Then $(E,\norm{\cdot}_E)$ is called a K\"othe function space over $(S,\A,\mu)$.

\newpage

Standard examples are the spaces $L^p(\mu)$ for $1\leq p\leq\infty$. More generally, Orlicz spaces with the Luxemburg norm (see \cite{rao}) are examples of K\"othe function spaces.\par
A function $f:S \rightarrow X$ is called simple if there are finitely many pairwise disjoint sets $A_1,\dots ,A_n\in \A$ such that $\mu(A_i)<\infty$ for all $i=1,\dots,n$, $f$ is constant on each $A_i$ and $f(t)=0$
for every $t\in S\sm \bigcup_{i=1}^nA_i$. \par 
$f$ is said to be Bochner-measurable if there exists a sequence $(f_n)_{n\in \N}$ of simple functions such that $\lim_{n\to \infty}\norm{f_n(t)-f(t)}=0$ $\mu$-a.\,e.\par 
According to the well-known Pettis Measurability Theorem, the following assertions are equivalent:
\begin{enumerate}[\upshape(i)]
\item $f$ is Bochner-measurable.
\item $x^*\circ f$ is measurable for every $x^*\in X^*$ and there is a separable, closed subspace $Y$ of $X$ such that $f(s)\in Y$ for $\mu$-a.\,e. $s\in S$.
\item $f$ is measurable (with respect to $\A$ and the Borel-$\sigma$-algebra of $X$) and there is a separable, closed subspace $Y$ of $X$ such that $f(s)\in Y$ for $\mu$-a.\,e. $s\in S$.
\end{enumerate}	

We denote by $E(X)$ the space of all Bochner-measurable functions
$f:S\rightarrow X$ (modulo equality a.\,e.) such that $\norm{f(\cdot)}\in E$.\par 
We define $\norm{f}_{E(X)}=\norm*{\norm{f(\cdot)}}_E$ for $f\in E(X)$. Then $E(X)$ becomes a Banach space, the so called K\"othe-Bochner space induced by $E$ and $X$.\par 
For $E=L^p(\mu)$ we obtain the usual Lebesgue-Bochner spaces $L^p(\mu,X)$ for $1\leq p\leq\infty$. For more information on K\"othe-Bochner spaces the reader is referred to the book \cite{lin}.\par
In \cite{hardtke}*{Theorems 4.1 and 4.5} the author proved the following results:
\begin{enumerate}[(a)]
\item If $X$ is LOH/LASQ and the simple functions are dense in $E(X)$, then $E(X)$ is also LOH/LASQ.
\item If $X$ is LOH/LASQ, then $L^{\infty}(\mu,X)$ is also LOH/LASQ.
\end{enumerate}
\par
The proofs in \cite{hardtke} are based on a general reduction theorem	and cor\-responding results for absolute sums of LOH/LASQ spaces that were obtained in \cite{abrahamsen2}*{Proposition 5.3}.\par
The assumption that the simple functions are dense in $E(X)$ holds true whenever $E$ is order continuous, in particular for $E=L^p(\mu)$ with $1\leq p<\infty$.\par
Here we will show that the result is still true without any additional assumptions on $E$ or $X$. The proof makes use of the
Kuratowski-Ryll-Nardzewski Theorem on the existence of measurable selections, which we will recall in the next section.

\section{Measurable selections}\label{sec:selections}
Let $(S,\A)$ be a measurable space and $(Y,d)$ a metric space.
Denote by $\Po(Y)$ the power-set of $Y$ and by $\B(Y)$ the Borel-$\sigma$-Algebra of $(Y,d)$. For a subset $M\ssq Y$ we denote by $\overline{M}$ the closure of $M$ in $(Y,d)$.\par
Let $F:S \rightarrow \Po(Y)$ be a set-valued map. For $M\ssq Y$ we put
\begin{equation*}
F_{-}(M):=\{s\in S:F(s)\cap M\neq \emptyset\}.
\end{equation*}
$F$ is said to be weakly $\A$-measurable if $F_{-}(U)\in \A$ for every open set $U\ssq Y$. If $F_{-}(C)\in \A$ for every closed set $C\ssq Y$, then $F$ is called $\A$-measurable. $\A$-measurability of $F$ implies weak $\A$-measurability (this follows from the fact that every open subset in a metric space is an $F_{\sigma}$-set).\par
The following standard lemma follows directly from the definition.
\begin{lemma}\label{lemma:closure}
Suppose that $F,G:S \rightarrow \Po(Y)$ are two set-valued maps such that $F(s)\ssq G(s)\ssq \overline{F(s)}$ for every $s\in S$. Then $F$ is weakly $\A$-measurable if ond only if $G$ is weakly $\A$-measurable.
\end{lemma}

The next lemma is also standard but we include a sketch of the proof here for the reader's convenience.
\begin{lemma}\label{lemma:caratheodory}
Suppose that $(Y,d)$ is separable and $g:S\times Y \rightarrow \R$ is a Carath\'eodory function, i.\,e. 
\begin{enumerate}[(i)]
\item $g(s,\cdot)$ is continuous for every $s\in S$,
\item $g(\cdot,y)$ is $\A$-measurable for every $y\in Y$.
\end{enumerate} 
Let $\alpha\in \R$ and put
\begin{equation*}
F(s):=\set*{y\in Y:g(s,y)> \alpha} \ \ \ \ \text{for\ all}\ s\in S.
\end{equation*}
Then $F:S \rightarrow \Po(Y)$ is $\A$-measurable.
\end{lemma}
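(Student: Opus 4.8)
The plan is to check directly that $F_{-}(C)\in\A$ for every closed set $C\ssq Y$, which is precisely the definition of $\A$-measurability (note that this is the stronger of the two measurability notions). The starting observation is that for each fixed $s\in S$ the set $F(s)=\set*{y\in Y:g(s,y)>\alpha}$ is open, since $g(s,\cdot)$ is continuous and $(\alpha,\infty)$ is open in $\R$; consequently $F(s)\cap C$ is open in the subspace topology of $C$.

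Next I would bring in separability. A subspace of a separable metric space is again separable, so $C$ carries a countable dense subset $D=\set*{y_n:n\in\N}$ with $D\ssq C$. The claim is then
\[
F_{-}(C)=\bigcup_{n\in\N}\set*{s\in S:g(s,y_n)>\alpha}.
\]
The inclusion ``$\supseteq$'' is immediate because $y_n\in C$ for every $n$. For ``$\subseteq$'', given $s\in F_{-}(C)$ choose $y\in C$ with $g(s,y)>\alpha$; the set $\set*{y'\in C:g(s,y')>\alpha}$ is a non-empty relatively open subset of $C$, hence meets $D$, which yields some $y_n$ with $g(s,y_n)>\alpha$. Finally, each set $\set*{s\in S:g(s,y_n)>\alpha}=g(\cdot,y_n)^{-1}((\alpha,\infty))$ lies in $\A$ because $g(\cdot,y_n)$ is $\A$-measurable, and $\A$ is closed under countable unions, so $F_{-}(C)\in\A$.

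The only point that requires a little care is the choice of the dense set: it must be taken \emph{inside} $C$, not as a once-and-for-all dense subset of $Y$. A fixed dense subset $D_0\ssq Y$ would already be enough to establish weak $\A$-measurability (for open $U$ one uses that $D_0\cap U$ is dense in $U$, and one could also pass to $s\mapsto\overline{F(s)}$ via Lemma~\ref{lemma:closure}), but for a closed target set $C$ one genuinely needs points of the dense set that lie in $C$, and this is exactly what separability of the subspace $C$ supplies. Everything else is routine manipulation of the $\sigma$-algebra $\A$, so I do not expect any serious obstacle.
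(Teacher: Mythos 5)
Your proof is correct and follows essentially the same route as the paper: pick a countable dense subset of the closed set $C$, use continuity of $g(s,\cdot)$ to see that $F_{-}(C)$ equals the countable union of the measurable sets $\set*{s:g(s,y_n)>\alpha}$, and conclude. You merely spell out the ``easy to see'' identity that the paper leaves to the reader, including the correct observation that the dense sequence must be chosen inside $C$.
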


\begin{proof}
Let $C\ssq Y$ be nonempty and closed. Choose a sequence $(y_n)_{n\in \N}$ such that $C=\overline{\set*{y_n:n\in \N}}$ and put $A_n:=\set*{s\in S:g(s,y_n)>\alpha}$ for every $n\in \N$. Then we have $A_n\in \A$	for each $n$ and it is easy to see that $F_{-}(C)=\bigcup_{n\in \N}A_n$. Thus $F_{-}(C)\in \A$.
\end{proof}	

A classical result on the existence of measurable selections is the Kuratowski-Ryll-Nardzewski Theorem (see for instance \cite{graf}*{Theorem 2.1}).
\begin{theorem}[Kuratowski-Ryll-Nardzewski Selection Theorem]\label{thm:kuratowski} 
Let $(S,\A)$ be a measurable space and $(Y,d)$ a complete, separable metric space. Let $F:S \rightarrow \Po(Y)$
be a weakly $\A$-measurable set-valued map such that $F(s)$ is non-empty and closed in $Y$ for every $s\in S$. Then there is an $\A$-$\B(Y)$-measurable map $f:S \rightarrow Y$ such that $f(s)\in F(s)$ for every $s\in S$.
\end{theorem}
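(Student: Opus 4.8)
The plan is to prove the selection theorem by a successive-approximation scheme: I would construct a sequence $(f_n)_{n\in\N}$ of $\A$-$\B(Y)$-measurable maps $f_n\colon S\to Y$, each taking only countably many values, converging fast enough at every point to yield a measurable limit that lands in $F(s)$. First I would fix a dense sequence $(y_k)_{k\in\N}$ in $Y$ (using separability) and aim to arrange, for every $s\in S$ and every $n$, the two estimates $d(f_n(s),F(s))<2^{-n}$ and $d(f_{n+1}(s),f_n(s))<2^{-n}+2^{-n-1}$.

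For the base step I would set $B_k:=F_{-}\bigl(B(y_k,1)\bigr)$ for each $k$; this belongs to $\A$ since $B(y_k,1)$ is open and $F$ is weakly $\A$-measurable, and $S=\bigcup_kB_k$ because every $F(s)$ is nonempty and the $y_k$ are dense. Then $f_0(s):=y_k$ for the least $k$ with $s\in B_k$ is $\A$-$\B(Y)$-measurable (for any Borel $B\ssq Y$, $f_0^{-1}(B)$ is the countable union of the sets $B_k\sm\bigcup_{j<k}B_j$ taken over the $k$ with $y_k\in B$, hence in $\A$), and clearly $d(f_0(s),F(s))<1$. For the inductive step, given $f_n$ I would put, for each $k$,
\[
A_k:=\set*{s\in S:d(y_k,f_n(s))<2^{-n}+2^{-n-1}}\cap F_{-}\bigl(B(y_k,2^{-n-1})\bigr).
\]
The first set lies in $\A$ because $s\mapsto d(y_k,f_n(s))$ is $\A$-measurable (a continuous map composed with a measurable one), the second by weak measurability. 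A triangle-inequality check gives $S=\bigcup_kA_k$: for fixed $s$, pick $z\in F(s)$ with $d(f_n(s),z)<2^{-n}$ (possible by the inductive hypothesis) and then $y_k$ with $d(y_k,z)<2^{-n-1}$; then $s\in A_k$. Defining $f_{n+1}(s):=y_k$ for the least $k$ with $s\in A_k$ yields, exactly as in the base step, an $\A$-$\B(Y)$-measurable map satisfying both required estimates at level $n+1$.

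To conclude, for each $s$ the bound on $d(f_{n+1}(s),f_n(s))$ makes $(f_n(s))_n$ a Cauchy sequence, so by completeness of $Y$ it converges to some $f(s)\in Y$; since $Y$ is metric and each $f_n$ is $\A$-$\B(Y)$-measurable, the pointwise limit $f$ is again $\A$-$\B(Y)$-measurable, and $d(f(s),F(s))=\lim_n d(f_n(s),F(s))=0$ forces $f(s)\in F(s)$ because $F(s)$ is closed. I expect the only real subtlety to be the measurability bookkeeping at each stage: the points to keep track of are that $F_{-}$ of an \emph{open ball} belongs to $\A$ (which is precisely what weak measurability supplies), that the countably many balls $B(y_k,r)$ really do exhaust $S$ (this is where separability is used), together with completeness for the limit, and that the ``least-index'' recipe keeps each $f_n$ measurable and countably valued. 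The chaining of triangle-inequality bounds needed for the Cauchy property is routine.
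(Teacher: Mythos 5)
The paper does not prove this theorem at all: it is quoted as a classical result with a reference to Graf's survey (Theorem 2.1 there), and is used as a black box in the proof of Theorem \ref{thm:lohlasq}. So there is no proof in the paper to compare against. Your successive-approximation argument is the standard proof of the Kuratowski--Ryll-Nardzewski theorem and, as far as I can check, it is correct and complete: the covering arguments for $\bigcup_k B_k=S$ and $\bigcup_k A_k=S$ are right, the ``least index'' construction does produce $\A$-$\B(Y)$-measurable, countably valued maps, the estimates $d(f_n(s),F(s))<2^{-n}$ and $d(f_{n+1}(s),f_n(s))<2^{-n}+2^{-n-1}$ propagate as claimed, and the limit is measurable and lands in the closed set $F(s)$. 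The only step you wave at rather than prove is that a pointwise limit of $\A$-$\B(Y)$-measurable maps into a metric space is measurable; that is a standard fact (write $f^{-1}(U)=\bigcup_m\liminf_n f_n^{-1}(U_m)$ with $U_m=\set{y\in U:d(y,Y\sm U)>1/m}$), but in a fully written-out version you should include it.
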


\section{LOH and LASQ K\"othe-Bochner spaces}\label{sec:LOHLASQ}
Now we are ready to prove the general stability result.
\begin{theorem}\label{thm:lohlasq}
Let $(S,\A,\mu)$ be a complete, $\sigma$-finite measure space and let $E$ be a K\"othe function space over $(S,\A,\mu)$. If $X$ is a real Banach space which is LOH/LASQ, then the K\"othe-Bochner space $E(X)$ is also LOH/LASQ.
\end{theorem}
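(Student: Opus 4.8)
The plan is to construct the required element $g\in S_{E(X)}$ \emph{pointwise}, by applying the LOH (resp.\ LASQ) property of $X$ to the normalized values $f(s)/\norm{f(s)}$. The only genuine difficulty is to make the pointwise choices cohere into a single Bochner-measurable function, and that is precisely what the Kuratowski--Ryll--Nardzewski Selection Theorem will supply. I describe the LOH case; the LASQ case is entirely parallel.

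First I would fix $f\in S_{E(X)}$ and $\eps\in(0,1)$ and set $S_0:=\set*{s\in S:f(s)\neq 0}\in\A$, which has positive measure since $\norm{f}_{E(X)}=1$. As $f$ is Bochner-measurable, after passing to a suitable representative I may assume $f(s)\in Y$ for every $s\in S$, where $Y\ssq X$ is a fixed separable closed subspace (put the function equal to $0$ on the null set where this fails); then $u(s):=f(s)/\norm{f(s)}$ is measurable and takes values in $S_Y$. Next I would pick $\set*{u_n:n\in\N}$ dense in $\overline{\set{u(s):s\in S_0}}$ and, for each $n$, use the LOH property of $X$ to choose $w_n\in S_X$ with $\norm{u_n\pm w_n}\geq 2-\eps/8$. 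Finally let $Z$ be the closed linear span of $Y\cup\set*{w_n:n\in\N}$, a separable closed --- hence complete, separable, metric --- subspace of $X$ containing every $u(s)$ and every $w_n$.

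The central step is the selection argument. For $s\in S_0$ I would put
\[
F_0(s):=\set*{v\in Z:\norm{v}<1,\ \norm{u(s)+v}>2-\eps/2,\ \norm{u(s)-v}>2-\eps/2}.
\]
This set is non-empty for every $s\in S_0$: taking $n$ with $\norm{u(s)-u_n}<\eps/8$, the triangle inequality gives $(1-\eps/8)w_n\in F_0(s)$. The function $(s,v)\mapsto\min\set*{\norm{u(s)+v},\norm{u(s)-v}}$ on $S_0\times Z$ is a Carath\'eodory function (continuous in $v$, measurable in $s$), so by Lemma~\ref{lemma:caratheodory} the map $s\mapsto H(s):=\set*{v\in Z:\min\set*{\norm{u(s)+v},\norm{u(s)-v}}>2-\eps/2}$ is $\A$-measurable, hence weakly $\A$-measurable; since $F_0(s)=H(s)\cap\set*{v:\norm{v}<1}$ with the latter a \emph{fixed} open set, $(F_0)_{-}(U)=H_{-}\paren*{U\cap\set*{v:\norm{v}<1}}\in\A$ for every open $U\ssq Z$, i.e.\ $F_0$ is weakly $\A$-measurable. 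Replacing $F_0$ by $s\mapsto\overline{F_0(s)}$ preserves weak $\A$-measurability by Lemma~\ref{lemma:closure}, and now Theorem~\ref{thm:kuratowski} applies over $S_0$: it produces an $\A$-$\B(Z)$-measurable selection $v:S_0\to Z$ with $\norm{v(s)}\leq 1$ and $\norm{u(s)\pm v(s)}\geq 2-\eps/2$ for all $s\in S_0$; the triangle inequality also forces $\norm{v(s)}\geq 1-\eps/2$.

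To conclude I would extend $v$ by $0$ off $S_0$ and set $g_0(s):=\norm{f(s)}\,v(s)$. Being the product of a scalar measurable function with a separably valued, measurable (hence Bochner-measurable) function, $g_0$ is Bochner-measurable, and $\norm{g_0(s)}\leq\norm{f(s)}$ pointwise gives $g_0\in E(X)$ by axiom~(iii), with $\norm{g_0}_{E(X)}\in[1-\eps/2,1]$. Since $\norm{f(s)\pm g_0(s)}=\norm{f(s)}\,\norm{u(s)\pm v(s)}\geq(2-\eps/2)\norm{f(s)}$ pointwise, monotonicity of $\norm{\cdot}_E$ yields $\norm{f\pm g_0}_{E(X)}\geq 2-\eps/2$; hence $g:=g_0/\norm{g_0}_{E(X)}\in S_{E(X)}$ satisfies $\norm{f\pm g}_{E(X)}\geq\norm{f\pm g_0}_{E(X)}-\abs*{1-\norm{g_0}_{E(X)}}\geq 2-\eps$, which is the LOH property of $E(X)$. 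For the LASQ case one instead chooses $w_n\in S_X$ with $\norm{u_n\pm w_n}\leq 1+\eps/8$, defines $F_0(s)$ by the conditions $\norm{v}>1-\eps/2$ and $\norm{u(s)\pm v}<1+\eps/2$ (an open sublevel set of the Carath\'eodory function $(s,v)\mapsto\max\set*{\norm{u(s)+v},\norm{u(s)-v}}$ intersected with a fixed open set), and concludes in the same way (now $\norm{v(s)}\leq 1+\eps/2$ and $(1+\eps/2)\norm{f(\cdot)}\in E$ dominates $\norm{g_0(\cdot)}$). I expect the selection argument of the third paragraph to be the main obstacle --- organizing the pointwise LOH/LASQ data into a weakly $\A$-measurable, non-empty- and closed-valued map into a complete separable metric space; the norm bookkeeping afterwards is routine.
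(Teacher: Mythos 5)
Your proposal is correct and follows essentially the same route as the paper: reduce to a separable range via the Pettis theorem, transfer the LOH/LASQ data to a dense sequence to build a separable target $Z$, encode the required inequalities as a superlevel/sublevel set of a Carath\'eodory function to get weak measurability (Lemmas \ref{lemma:caratheodory} and \ref{lemma:closure}), and extract a Bochner-measurable selection via Theorem \ref{thm:kuratowski}. The only cosmetic difference is that the paper constrains the selection to lie exactly in $S_Z$, so that $h(s):=\norm{f(s)}\tilde{f}(s)$ already has norm one and no final renormalization is needed, whereas you select in the unit ball and normalize $g_0$ at the end at the cost of an extra $\eps/2$.
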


\begin{proof}
1) Assume that $X$ is LOH. Fix $\eps>0$ and $f\in E(X)$ with $\norm{f}_{E(X)}=1$.
Since $f:S \rightarrow X$ is Bochner-measurable there is a separable, closed subspace $Y$ of $X$ such that $f(s)\in Y$ for $\mu$-a.\,e. $s\in S$ (Pettis Measurability Theorem). Without loss of generality we may assume that this holds even for all $s\in S$.\par
Put $S^{\prime}:=\{s\in S:f(s)\neq 0\}$.\par
Since $Y$ is separable the unit sphere $S_Y$ is also separable. We fix a sequence $(y_n)_{n\in \N}$ which is dense in $S_Y$.\par
Because $X$ is LOH we can find a sequence $(z_n)_{n\in \N}$ in $S_X$ such that $\norm{y_n\pm z_n}\geq 2-\eps/2$ for every $n\in \N$. We put $Z:=\overline{\mathrm{span}}\{z_n:n\in \N\}$. This is again a separable, closed subspace of $X$.\par
Next we define $F:S^{\prime}\rightarrow \Po(S_Z)$ by
\begin{equation*}
F(s):=\overline{\set*{z\in S_Z:\norm*{\frac{f(s)}{\norm{f(s)}}\pm z}> 2-\eps}} \ \ \ \text{for\ all}\ s\in S^{\prime}.
\end{equation*}	
We have $F(s)\neq \emptyset$ for every $s\in S^{\prime}$. To see this note that $f(s)/\norm{f(s)}\in S_Y$ and hence there is some index $N\in \N$ such that $\norm{f(s)/\norm{f(s)}-y_N}<\eps/2$. It follows that
\begin{equation*}
\norm*{\frac{f(s)}{\norm{f(s)}}\pm z_N}\geq	\norm{y_N\pm z_N}-\norm*{\frac{f(s)}{\norm{f(s)}}-y_N}> 2-\frac{\eps}{2}-\frac{\eps}{2}=2-\eps.
\end{equation*}	  
If we define 
\begin{equation*}
g(s,z):=\min\set*{\norm*{\frac{f(s)}{\norm{f(s)}}+z},\norm*{\frac{f(s)}{\norm{f(s)}}-z}} \ \ \ \text{for}\ s\in S^{\prime},\,z\in S_Z,
\end{equation*}
then $F(s)=\overline{\set*{z\in S_Z:g(s,z)>2-\eps}}$ and since $g$ is a Carath\'eodory function, it follows from Lemma \ref{lemma:caratheodory} and Lemma \ref{lemma:closure} that $F$ is weakly measurable.\par
Thus by the Kuratowski-Ryll-Nardzewski Selection Theorem there exists a measurable function $\tilde{f}:S^{\prime}\rightarrow S_Z$ such that $\tilde{f}(s)\in F(s)$ for every $s\in S^{\prime}$. Note that by Pettis Measurability Theorem $\tilde{f}$ is also Bochner-measurable. We put $h(s):=\norm{f(s)}\tilde{f}(s)$ for $s\in S^{\prime}$ and $h(s):=0$ for $s\in S\sm S^{\prime}$. Then $h$ is Bochner-measurable and $\norm{h(s)}=\norm{f(s)}$ for every $s\in S$ and thus $\norm{h}_{E(X)}=\norm{f}_{E(X)}=1$.\par 
We further have $\norm{f(s)\pm h(s)}\geq (2-\eps)\norm{f(s)}$ for every $s\in S$. This implies $\norm{f\pm h}_{E(X)}\geq (2-\eps)\norm{f}_{E(X)}=2-\eps$.\par 
This proves that $E(X)$ is LOH.\par
2) Now assume that $X$ is LASQ. We take $\eps$, $f$, $Y$, $S^{\prime}$ and $(y_n)_{n\in \N}$ as in part 1). Since $X$ is LASQ there is a sequence $(z_n)_{n\in \N}$ in $S_X$ such that $\norm{y_n\pm z_n}\leq 1+\eps/2$ for every $n\in \N$ and we put $Z:=\overline{\mathrm{span}}\{z_n:n\in \N\}$ and 
\begin{equation*}
F(s):=\overline{\set*{z\in S_Z:\norm*{\frac{f(s)}{\norm{f(s)}}\pm z}<1+\eps}} \ \ \ \text{for\ all}\ s\in S^{\prime}.
\end{equation*} 
Analogously to the proof in part 1) we can see that each set $F(s)$ is non-empty and also that $F$ is weakly measurable. For the latter, put
\begin{equation*}
g(s,z):=\max\set*{\norm*{\frac{f(s)}{\norm{f(s)}}+z},\norm*{\frac{f(s)}{\norm{f(s)}}-z}} \ \ \ \text{for}\ s\in S^{\prime},\,z\in S_Z.	
\end{equation*}	
Then $g$ is a Carath\'eodory function and 
$F(s)=\overline{\set*{z\in S_Z:g(s,z)<1+\eps}}$.\par
By the Kuratowski-Ryll-Nardzewski Selection Theorem we find a measurable function
$\tilde{f}:S^{\prime} \rightarrow S_Z$ such that $\tilde{f}(s)\in F(s)$ for every $s\in S^{\prime}$. Then if we define $h$ as in 1), we find that $h\in S_{E(X)}$ and $\norm{f(s)\pm h(s)}\leq \norm{f(s)}(1+\eps)$ for every $s\in S$. This implies
$\norm{f\pm h}_{E(X)}\leq 1+\eps$ and the proof is finished.
\end{proof}

\begin{bibdiv}	
\begin{biblist}	
	
\bib{abrahamsen}{article}{
	title={Remarks on diameter 2 properties},
	author={Abrahamsen, T.},
	author={Lima, V.},
	author={Nygaard, O.},
	journal={J. Conv. Anal.},
	volume={20},
	date={2013},
	pages={439--452}
	}

\bib{abrahamsen2}{article}{
	title={Almost square Banach spaces},
	author={Abrahamsen, T. A.},
	author={Langemets, J.},
	author={Lima, V.},
	journal={J. Math. Anal. Appl.},
	volume={434},
	number={2},
	date={2016},
	pages={1549--1565}
    }

\bib{abrahamsen3}{article}{
	title={Almost square dual Banach spaces},
	author={Abrahamsen, T. A.},
	author={H\'ajek, P.},
	author={Troyanski, S.},
	journal={J. Math. Anal. Appl.},
	volume={487},
	number={2},
	date={2020},
	pages={11\,p.}
	}

\bib{acosta}{article}{
	title={Stability results of diameter two properties},
	author={Acosta, M. D.},
	author={Becerra Guerrero, J.},
	author={L\'opez-P\'erez, G.},
	journal={J. Conv. Anal.},
	volume={22},
	number={1},
	date={2015},
	pages={1--17}
    }

\bib{becerra-guerrero}{article}{
	title={Relatively weakly open subsets of the unit ball in function spaces},
	author={Becerra Guerrero, J.},
	author={L\'opez-P\'erez, G.},
	journal={J. Math. Anal. Appl.},
	volume={315},
	date={2006},
	pages={544--554}
    }

\bib{becerra-guerrero2}{article}{
	title={Octahedral norms and convex combination of slices in Banach spaces},
	author={Becerra Guerrero, J.},
	author={L\'opez-P\'erez, G.},
	author={Rueda Zoca, A.},
	journal={J. Funct. Anal.},
	volume={266},
	number={4},
	date={2014},
	pages={2424--2435}
    }

\bib{becerra-guerrero4}{article}{
	title={Big slices versus big relatively weakly open subsets in Banach spaces},
	author={Becerra Guerrero, J.},
	author={L\'opez-P\'erez, G.},
	author={Rueda Zoca, A.},
	journal={J. Math. Anal. Appl.},
	volume={428},
	date={2015},
	pages={855--865}
    }

\bib{becerra-guerrero5}{article}{
	title={Octahedral norms in spaces of operators},
	author={Becerra Guerrero, J.},
	author={L\'opez-P\'erez, G.},
	author={Rueda Zoca, A.},
	journal={J. Math. Anal. Appl.},
	volume={427},
	date={2015},
	pages={171--184}
    }

\bib{becerra-guerrero3}{article}{
	title={Some results on almost square Banach spaces},
	author={Becerra Guerrero, J.},
	author={L\'opez-P\'erez, G.},
	author={Rueda Zoca, A.},
	journal={J. Math. Anal. Appl.},
	volume={438},
	number={2},
	date={2016},
	pages={1030--1040}
    }
	
\bib{deville}{book}{
	title={Smoothness and renormings in Banach spaces},
	author={Deville, R.},
	author={Godefroy, G.},
	author={Zizler, V.},
	publisher={Longman Scientific \& Technical},
	address={Harlow},
	series={Pitman Monographs and Surveys in Pure and Applied Mathematics},
	volume={64},
	date={1993}
	}
	
\bib{godefroy}{article}{
	title={Metric characterization of first Baire class linear forms and octahedral norms},
	author={Godefroy, G.},
	journal={Studia Math.},
	volume={95},
	number={1},
	date={1989},
	pages={1--15}
	}

\bib{graf}{article}{
	title={Selected results on measurable selections},
	author={Graf, S.},
	book={
		title={Proceedings of the 10th Conference on Abstract Analysis},
		editor={Frol\'{\i}k, Z.},
		publisher={Circolo Matematico di Palermo},
		address={Palermo},
		date={1982}
		},
	pages={87--122}
	}

\bib{haller}{article}{
	title={Two remarks on diameter 2 properties},
	author={Haller, R.},
	author={Langemets, J.},
	journal={Proc. Estonian Acad. Sci.},
	volume={63},
	number={1},
	date={2014},
	pages={2--7}
    }

\bib{haller2}{article}{
	title={Geometry of Banach spaces with an octahedral norm},
	author={Haller, R.},
	author={Langemets, J.},
	journal={Acta Comment. Univ. Tartuensis Math.},
	volume={18},
	number={1},
	date={2014},
	pages={125-133}
    }
	
\bib{haller3}{article}{
	title={On duality of diameter 2 properties},
	author={Haller, R.},
	author={Langemets, J.},
	author={P\~{o}ldvere, M.},
	journal={J. Conv. Anal.},
	volume={22},
	number={2},
	date={2015},
	pages={465--483}
    }	

\bib{haller4}{article}{
	title={Rough norms in spaces of operators},
	author={Haller, R.},
	author={Langemets, J.},
	author={P\~{o}ldvere, M.},
	journal={Math. Nachr.},
	date={2017},
	pages={11p.},
	note={doi:10.1002/mana.201600409}
    }

\bib{haller5}{article}{
	title={Stability of average roughness, octahedrality, and strong diameter two properties of Banach spaces with respect to absolute sums},
	author={Haller, R.},
	author={Langemets, J.},
	author={Nadel, R.},
	journal={Banach J. Math. Anal.},
	volume={12},
	date={2018},
	pages={222--239}
	} 

\bib{hardtke2}{article}{
	title={Summands in locally almost square and locally octahedral spaces},
	author={Hardtke, J.-D.},
	journal={Acta Comment. Univ. Tartu. Math.},
	volume={22},
	date={2018}, 
	pages={149--162}
	}

\bib{hardtke}{article}{
	title={On certain geometric properties in Banach spaces of vector-valued functions},
	author={Hardtke, J.-D.},
	journal={J. Math. Phys. Anal. Geom.},
	volume={16},
	number={2},
	date={2020},
	pages={119--137}
	}

\bib{kubiak}{article}{
	title={Some geometric properties of the Ces\`aro function spaces},
	author={Kubiak, D.},
	journal={J. Convex Anal.},
	volume={21},
	number={1},
	date={2014},
	pages={189--200}
    }

\bib{langemets1}{article}{
	title={Almost square and octahedral norms in tensor products of Banach spaces},
	author={Langemets, J.},
	author={Lima, V.},
	author={Rueda Zoca, A.},
	journal={Rev. Real Acad. Cienc. Exact. Fisc. Nat. Serie A Mat.},
	volume={111},
	date={2017},
	pages={841--853}
    }

\bib{langemets2}{article}{
	title={Octahedral norms in tensor products of Banach spaces},
	author={Langemets, J.},
	author={Lima, V.},
	author={Rueda Zoca, A.},
	journal={Quart. J. Math.},
	volume={68},
	number={4},
	date={2017},
	pages={1247--1260}
    }

\bib{lin}{book}{
	title={K\"othe-Bochner function spaces},
	author={Lin, P. K.},
	publisher={Birkh\"auser},
	address={Boston-Basel-Berlin},
	date={2004}
    }

\bib{lopez-perez}{article}{
	title={The big slice phenomena in $M$-embedded and $L$-embedded spaces},
	author={L\'opez-P\'erez, G.},
	journal={Proc. Amer. Math. Soc.},
	volume={134},
	date={2005},
	pages={273--282}
    }

\bib{rao}{book}{
	title={Theory of Orlicz spaces},
	author={Rao, M. M.},
	author={Ren, Z. D.},
	series={Monographs and Textbooks in Pure and Applied Mathematics},
	volume={146},
	publisher={Marcel Dekker, Inc.},
	address={New York},
	date={1991}
	}
	
\end{biblist}
\end{bibdiv}

\address
\email
	
\end{document}